\newcommand{\newsection}[1]{\setcounter{equation}{0} \section{#1}}
\theoremstyle{plain}
\newtheorem{propn}{Proposition}[section]
\newtheorem{thm}[propn]{Theorem}
\newtheorem*{thm*}{Theorem}
\theoremstyle{definition}
\theoremstyle{remark}
\newcommand{\q}{\mathcal{Q}}
\newcommand{\s}{\mathcal{S}}
\newcommand{\cle}{\mathcal{E}}
\newcommand{\clh}{\mathcal{H}}
\newcommand{\clm}{\s}
\newcommand{\clq}{\mathcal{Q}}
\newcommand{\cls}{\mathcal{S}}
 \newcommand{\D}{\mathbb{D}}
\newcommand{\vp}{\varphi}
\newtheorem{Theorem}{\sc Theorem}[section]
\newtheorem{Lemma}[Theorem]{\sc Lemma}
\newtheorem{Proposition}[Theorem]{\sc Proposition}
\newtheorem{Corollary}[Theorem]{\sc Corollary}
\newtheorem{Definition}[Theorem]{\sc Definition}
\newtheorem{Example}[Theorem]{\sc Example}
\newtheorem{Remark}[Theorem]{\sc Remark}
\newtheorem{Note}[Theorem]{\sc Note}
\newtheorem{Question}{\sc Question}
\newtheorem{ass}[Theorem]{\sc Assumption}
\newcommand{\bt}{\begin{Theorem}}
\def\beginlem{\begin{Lemma}}
\def\beginprop{\begin{Proposition}}
\def\begincor{\begin{Corollary}}
\def\begindef{\begin{Definition}}
\def\beginexamp{\begin{Example}}
\def\beginrem{\begin{Remark}}
\def\beginq{\begin{Question}}
\def\beginass{\begin{ass}}
\def\beginnote{\begin{Note}}
\newcommand{\et}{\end{Theorem}}
\def\endlem{\end{Lemma}}
\def\endprop{\end{Proposition}}
\def\endcor{\end{Corollary}}
\def\enddef{\end{Definition}}
\def\endexamp{\end{Example}}
\def\endrem{\end{Remark}}
\def\endq{\end{Question}}
\def\endass{\end{ass}}
\def\endnote{\end{Note}}
\begin{document}

\title{Rank of a co-doubly commuting submodule is $2$}

\dedicatory{Dedicated to the memory of our friend and colleague
Sudipta Dutta}

\author[Chattopadhyay] {Arup Chattopadhyay}
\address{%\vskip10pt
(A. Chattopadhyay) Indian Institute of Technology Guwahati \\
Department of Mathematics \\ Amingaon Post  \\ Guwahati \\ 781039
\\ Assam, India}

\email{arupchatt@iitg.ernet.in, 2003arupchattopadhyay@gmail.com}

\author[Das] {B. Krishna Das}

\address{%\vskip10pt
(B. K. Das) Indian Institute of Technology Bombay\\
Department of Mathematics \\ Powai\\ Mumbai \\ 400076 \\
India} \email{dasb@math.iitb.ac.in, bata436@gmail.com}

\author[Sarkar]{Jaydeb Sarkar}

\address{%\vskip10pt
(J. Sarkar) Indian Statistical Institute \\ Statistics and
Mathematics Unit \\ 8th Mile, Mysore Road \\ Bangalore \\ 560059
\\ India}

\email{jay@isibang.ac.in, jaydeb@gmail.com}

\subjclass[2010]{47A13, 47A15, 47A16, 46M05, 46C99, 32A70}
\keywords{Hardy space over bidisc, rank, joint invariant subspaces,
semi-invariant subspaces}

%\today

\begin{abstract}
We prove that the rank of a non-trivial co-doubly commuting
submodule is $2$. More precisely, let $\varphi, \psi \in
H^\infty(\mathbb{D})$ be two inner functions. If
$\mathcal{Q}_{\varphi} = H^2(\mathbb{D})/ \varphi H^2(\mathbb{D})$
and $\mathcal{Q}_{\psi} = H^2(\mathbb{D})/ \psi H^2(\mathbb{D})$,
then
\[
\mbox{rank~}(\mathcal{Q}_{\varphi} \otimes \mathcal{Q}_{\psi})^\perp
= 2.
\]
An immediate consequence is the following: Let $\mathcal{S}$ be a
co-doubly commuting submodule of $H^2(\mathbb{D}^2)$. Then
$\mbox{rank~} \mathcal{S} = 1$ if and only if $\mathcal{S} = \Phi
H^2(\D^2)$ for some one variable inner function $\Phi \in
H^\infty(\D^2)$. This answers a question posed by R. G. Douglas and
R. Yang \cite{DY}.
\end{abstract}

\maketitle

\newsection{Introduction}\label{sec:1}

Let $T = (T_1, \ldots, T_n)$ be an $n$-tuple of commuting bounded
linear operators on a Hilbert space $\clh$. For a subset $E
\subseteq \clh$ we denote $[E]_T$ by the close subspace
$\overline{\mbox{span}}\{ T_1^{k_1} \cdots T_n^{k_n} E: k_j \in
\mathbb{N}, j = 1, \ldots, n \}$ of $\clh$. Then the \textit{rank} of $T$
\cite{DP} is the unique number
\[
\mbox{rank}(T) = \min \{\# E : [E]_T = \clh, E \subseteq \clh\}.
\]
A closed
subspace $\cls$ of $H^2(\mathbb{D}^n)$, the Hardy
space over the unit polydisc $\mathbb{D}^n$, is said to be shift invariant if
$M_{z_i}(\cls)\subseteq \cls$ for $i=1,2,\ldots,n$, where $M_{z_i}$ is the co-ordinate multiplication operator on $H^2(\mathbb{D}^n)$.
The \textit{rank} of a shift
invariant subspace $\cls$ of $H^2(\mathbb{D}^n)$ is the rank of the corresponding $n$-tuple of restricted co-ordinate shift operators, that is
\[
\mbox{rank~} \clm = \mbox{rank~}(M_{z_1}|_{\clm},\ldots, M_{z_n}|_{\clm}).
\]

The rank of a bounded linear operator (or, of a commuting tuple of
bounded linear operators) on a Hilbert space is an important
numerical invariant. Very briefly, the rank of a bounded linear
operator is the cardinality of a minimal generating set (see the
definition below). One of the most intriguing and important problems
in operator theory and function theory is the existence of a
finite generating set for a commuting tuple of operators. Alternatively, one may ask
when the rank of a commuting tuple of operators is finite.

Prototype examples of rank one operators are the co-ordinate multiplication
operator tuple $(M_{z_1}, \ldots, M_{z_n})$ on the Hardy space, the
(weighted) Bergman space over the unit ball and the polydisc in
$\mathbb{C}^n$, $n \geq 1$, and the Drury-Arveson space over the
unit ball in $\mathbb{C}^n$. Moreover, a particular version of the
celebrated invariant subspace theorem of Beurling says: A shift
invariant (or, shift co-invariant) subspace of the one variable
Hardy space is of rank one.

Computation of ranks of shift invariant as well as shift
co-invariant subspaces beyond the case of the one variable Hardy
space is an excruciatingly difficult problem, even if one considers
only shift invariant (as well as co-invariant) subspaces of the
Hardy space over the unit polydisc in $\mathbb{C}^n$, $n > 1$ (see
however \cite{CDS2, III, I31, I32, Y}).

The purpose of this paper is to compute the rank of a tractable
class of shift invariant subspaces of the two variables Hardy space,
$H^2(\D^2)$, over the bidisc $\D^2$ in $\mathbb{C}^2$. In order to
state the precise contribution of this paper, we need to introduce
first some definitions and notations.

%The purpose of this paper is to compute the rank of a simple class
%of submodules of the Hardy module over bidisc. To be more precise,
%we first recall some definitions and notations.

We denote the open unit disc of $\mathbb{C}$ by $\D$, and the unit
circle by $\mathbb{T}$. The Hardy space over the unit disc $\D$
(bidisc $\D^2$), denoted by $H^2(\D)$ ($H^2(\D^2)$), is the Hilbert
space of all square summable holomorphic functions on $\D$ (on
$\D^2$). Also we will denote by $M_z$ and $M_w$ the multiplication
operators on $H^2(\D^2)$ by the coordinate functions $z$ and $w$,
respectively. It is easy to see that $(M_{z}, M_{w})$ is a pair of
commuting isometries, that is,
\[
M_{z} M_{w} = M_{w} M_{z}, \quad M_{z}^* M_{z} = M_{w}^* M_{w} =
I_{H^2(\D^2)}.
\]
Identifying $H^2(\D^2)$ with the $2$-fold Hilbert space tensor
product $H^2(\D) \otimes H^2(\D)$, one can represent $(M_{z},
M_{w})$ as $(M_{z} \otimes I_{H^2(\D)}, I_{H^2(\D)}\otimes M_{w})$.

Let $\cls$ and $\clq$ be closed subspaces of $
H^2(\mathbb{D}^2)$. Then $\cls$ is said to be a \textit{submodule}
if $M_{z}(\mathcal{S})\subseteq \mathcal{S}$ and
$M_{w}(\mathcal{S})\subseteq \mathcal{S}$. We say that $\clq$ is a
\textit{quotient module} if $\mathcal{Q}^{\perp}$ is a submodule.

%Let $\cls$ be a submodule of $H^2(\D^2)$. Then for simplicity of
%notation, let us set
%\[
%\mbox{rank~} \clm = \mbox{rank~}(M_z|_{\clm}, M_w|_{\clm}).
%\]

A well-known result due to Beurling states that if $\cls$ is a
submodule of $H^2(\mathbb{D})$ (that is, $\cls$ is closed subspace
of $H^2(\mathbb{D})$ and $M_z \cls \subseteq \cls$), then $\cls$ can
be represented as
\[
\cls = \cls_{\varphi} : = \varphi H^2(\mathbb{D}),
\]
where $\varphi \in H^\infty(\mathbb{D})$ is an inner function (that
is, $\varphi$ is a bounded holomorphic function on $\D$ and
$|\varphi| = 1$ a.e. on $\mathbb{T}$). Consequently, a quotient
module $\clq$ (that is, $\clq$ is a closed subspace of $H^2(\D)$ and
$M_z^* \clq \subseteq \clq$) of $H^2(\mathbb{D})$ can be represented
as
\[
\clq = \clq_{\varphi} := (\cls_{\varphi})^\perp = H^2(\mathbb{D})/
\varphi H^2(\mathbb{D}).
\]
It readily follows that
\[
\mbox{rank~} (M_z|_{\cls_\varphi}) = \mbox{rank~}(P_{\clq_{\varphi}}
M_z|_{\clq_{\varphi}}) = 1.
\]
Rudin \cite{R}, however, pointed out that there exists a submodule
$\cls$ of $H^2(\D^2)$ such that the rank of $\cls$ is not finite
(see also \cite{I31}, \cite{Se} and \cite{SY}).

A quotient module $\q$ of $H^2(\D^2)$ is \emph{doubly commuting} if
$C_{z}C_{w}^*=C_{w}^*C_{z}$, where $C_{z} = P_{\q}M_{z}|_{\q}$ and
$C_{w} = P_{\q}M_{w}|_{\q}$. A submodule $\s$ of $H^2(\D^2)$ is
\emph{co-doubly commuting} if the quotient module $\s^{\perp} (\cong
H^2(\D^2)/\cls)$ is doubly commuting.

The following useful characterization of co-doubly commuting
submodules is essential for our study (see \cite{bshift, sarkar}):
If $\clq$ is a quotient module of $H^2(\D^2)$, then $\clq$ is a
doubly commuting quotient module if and only if
\[
\clq = \clq_1 \otimes \clq_2,
\]
for some quotient modules $\clq_1$ and $\clq_2$ of $H^2(\D)$.

Let $\clm = (\clq_1 \otimes \clq_2)^\perp$ be a non-zero co-doubly
commuting submodule. If $\clq_j = H^2(\D)$, for some $j = 1, 2$,
then it is easy to see that
\[
\mbox{rank~} \cls = 1.
\]
Now let both $\clq_1$ and $\clq_2$ be non-trivial quotient modules
of $H^2(\D)$, that is, $\clq_j \neq \{0\}, H^2(\D)$, $j = 1, 2$.
Then there exist inner functions $\varphi, \psi \in H^\infty(\D)$
such that $\clq_1 = \clq_{\varphi}$ and $\clq_2 = \clq_{\psi}$. The
main purpose of the present paper is to prove that (see Theorem
\ref{mainthm})
\[
\mbox{rank~} (\q_{\varphi}\otimes \q_{\psi})^{\perp} = 2.
\]
As a consequence of this, we give a complete and affirmative answer
to a conjecture of Douglas and Yang (see page 220 \cite{DY}): If
$\clm$ is a rank one co-doubly commuting submodule, then $\clm =
\Phi H^2(\D^2)$ for some one variable inner function $\Phi \in
H^\infty(\D)$.

\newsection{Proof of the main result}

We begin with a simple but crucial observation on the rank of a joint
semi-invariant subspace of a commuting tuple of operators.

\begin{Lemma}\label{Lemma}
Let $T = (T_1, \ldots, T_n)$ be an $n$-tuple of commuting operators
on a Hilbert space $\clh$. Let $\cls_1$ and $\cls_2$ be two joint
$T$-invariant subspaces of $\clh$ and $\cls_2 \subseteq \cls_1$. If
$\cls = \cls_1 \ominus \cls_2$, then
\[
\mbox{rank~} (P_{\cls} T_1|_{\cls}, \ldots, P_{\cls} T_n|_{\cls})
\leq \mbox{rank~} (T_1|_{\cls_1}, \ldots, T_n|_{\cls_1}).
\]
\end{Lemma}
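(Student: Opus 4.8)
The plan is to transport a minimal generating set for the restriction tuple to $\cls$ via the orthogonal projection $P_\cls$. Set $C_i := P_\cls T_i|_\cls$. We may assume $m := \mbox{rank}(T_1|_{\cls_1}, \ldots, T_n|_{\cls_1})$ is finite, the inequality being vacuous otherwise, and fix $E \subseteq \cls_1$ with $\# E = m$ and $[E]_T = \cls_1$. The goal is to show that $P_\cls E \subseteq \cls$ generates $(C_1, \ldots, C_n)$; since $\#(P_\cls E) \le m$, this yields $\mbox{rank}(C_1, \ldots, C_n) \le m$.

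The technical core is the multiplicativity identity
\[
P_\cls\, T_1^{k_1}\cdots T_n^{k_n}\, e = C_1^{k_1}\cdots C_n^{k_n}\,(P_\cls e), \qquad e \in \cls_1,\ k_1, \ldots, k_n \in \Nat.
\]
First note that $\cls_2 \subseteq \cls_1$ gives the orthogonal decomposition $\cls_1 = \cls_2 \oplus \cls$, hence $\cls_1 \ominus \cls = \cls_2$, $P_\cls \cls_1 = \cls$, and $e - P_\cls e \in \cls_2$ for every $e \in \cls_1$. As $\cls_2$ is $T$-invariant, $T_1^{k_1}\cdots T_n^{k_n}(e - P_\cls e) \in \cls_2 \perp \cls$, so it suffices to treat $e \in \cls$. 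For $x \in \cls$ and a single $i$, invariance of $\cls_1$ gives $T_i x \in \cls_1$ and $T_i x - C_i x = T_i x - P_\cls T_i x \in \cls_1 \ominus \cls = \cls_2$. One then induces on $k_1 + \cdots + k_n$: peeling off the leftmost nontrivial coordinate operator $T_i$, write $T_1^{k_1}\cdots T_n^{k_n} x = T_i(y + s)$, where $y \in \cls$ is the corresponding $C$-monomial applied to $x$ (inductive hypothesis) and $s \in \cls_2$; then $T_i s \in \cls_2$ and $T_i y - C_i y \in \cls_2$ by the single-coordinate case, so $T_1^{k_1}\cdots T_n^{k_n} x - C_1^{k_1}\cdots C_n^{k_n} x \in \cls_2$, and applying $P_\cls$ gives the identity. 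This is the commuting-tuple version of Sarason's lemma on semi-invariant subspaces, and it is precisely where both hypotheses --- $\cls_1$ and $\cls_2$ invariant --- enter; note that the $C_i$ need not commute, which causes no difficulty.

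Granting the identity, the conclusion is a one-line continuity argument using that $[E]_T = \cls_1$ and $P_\cls$ is bounded:
\[
\cls = P_\cls \cls_1 = P_\cls \overline{\mbox{span}}\{T_1^{k_1}\cdots T_n^{k_n} e : e \in E,\ k_j \in \Nat\} \subseteq \overline{\mbox{span}}\{P_\cls T_1^{k_1}\cdots T_n^{k_n} e\} = [P_\cls E]_{(C_1, \ldots, C_n)} \subseteq \cls,
\]
the last equality by the multiplicativity identity; hence all inclusions are equalities and $P_\cls E$ generates $(C_1, \ldots, C_n)$. I anticipate no genuine obstacle: the only points requiring attention are the bookkeeping in the induction for the multiplicativity identity and the elementary identification $\cls_1 \ominus \cls = \cls_2$, which is what forces every error term into a $T$-invariant subspace orthogonal to $\cls$.
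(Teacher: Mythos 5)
Your proposal is correct and follows essentially the same route as the paper: both project a minimal generating set of $(T_1|_{\cls_1}, \ldots, T_n|_{\cls_1})$ into $\cls$ and rest on the identity $P_{\cls} T_1^{k_1} \cdots T_n^{k_n}|_{\cls_1} = (P_{\cls}T_1|_{\cls})^{k_1} \cdots (P_{\cls}T_n|_{\cls})^{k_n} P_{\cls}|_{\cls_1}$, which holds because the $T$-invariance of $\cls_2 = \cls_1 \ominus \cls$ pushes every error term into a subspace orthogonal to $\cls$. Your write-up merely makes explicit the induction and the closure/continuity step that the paper's proof leaves implicit.
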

\begin{proof}
Let $m \in \mathbb{N}$ be the right side of the above inequality.
Let $\{f_j\}_{j=1}^m \subseteq \cls_1$ be a generating set for
$(T_1|_{\cls_1}, \ldots, T_n|_{\cls_1})$. Clearly, $P_{\cls} T_j
P_{\cls} = P_{\cls} T_j|_{\cls_1}$ for all $j = 1, \ldots, n$. This
yields
\[
(P_{\cls} T_i P_{\cls}) (P_{\cls} T_j P_{\cls}) = P_{\cls} (T_i
T_j)|_{\cls_1} \quad \quad (i, j = 1, \ldots, n).
\]
It hence follows that $\{P_{\cls} f_j\}_{j=1}^m$ is a generating set
for $(P_{\cls} T_1|_{\cls}, \ldots, P_{\cls} T_n|_{\cls})$. This
completes the proof.
\end{proof}

We now prove the main result of this paper.

\begin{thm}\label{mainthm}
Let $\varphi, {\psi}\in H^{\infty}(\D)$ be two inner functions. If
\[
\cls = \left(\q_{\varphi}\otimes \q_{\psi}\right)^{\perp},
\]
then $\mbox{rank~} \cls = 2$.
\end{thm}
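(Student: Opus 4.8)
The plan is to bound the rank from above by $2$ using the semi-invariant structure, and then to show it cannot be $1$. For the upper bound, note that $\cls = (\q_\varphi \otimes \q_\psi)^\perp$ sits inside $H^2(\D^2)$ as an orthogonal complement of a quotient module. I would realize $\cls$ as a difference of two submodules: since $\q_\varphi \otimes \q_\psi = (\varphi H^2(\D))^\perp \otimes (\psi H^2(\D))^\perp$, one has
\[
\cls = \varphi H^2(\D^2) + \psi H^2(\D^2),
\]
and this sum is the orthogonal complement inside $H^2(\D^2)$ of $\q_\varphi \otimes \q_\psi$, hence a submodule generated by the two elements $\varphi$ and $\psi$. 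That already gives $\mbox{rank~}\cls \le 2$ directly, without even invoking Lemma~\ref{Lemma}; but Lemma~\ref{Lemma} is the conceptually clean way to see it, by writing $\cls$ as $\cls_1 \ominus \cls_2$ with $\cls_1$ of rank $2$ (e.g. $\cls_1 = \varphi H^2(\D^2) + \psi H^2(\D^2)$ itself, or $\cls_1 = H^2(\D^2) \ominus$ something of small rank). So the upper bound $\mbox{rank~}\cls \le 2$ is the routine half.

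The substantive half is proving $\mbox{rank~}\cls \ne 1$. Suppose for contradiction that $\cls = [f]$ for a single $f \in \cls$, i.e. $\cls = \overline{\mbox{span}}\{z^j w^k f : j,k \ge 0\}$. The key structural fact is that $\cls$ is co-doubly commuting, so $\cls^\perp = \q_\varphi \otimes \q_\psi$; I would exploit that $f$ must then be a ``cyclic vector'' for the pair of restricted shifts on $\cls$, and derive a contradiction by examining the action of $M_z^*$ and $M_w^*$ and the fringe/evaluation structure. Concretely: if $\cls$ were singly generated, then by a Beurling-type argument (the one-variable Beurling theorem applied in each variable, or the Ahern--Clark / Mandrekar-type rigidity for the bidisc), the generator $f$ would have to be an inner function $\Phi$ and $\cls = \Phi H^2(\D^2)$. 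But then $\cls^\perp = H^2(\D^2) \ominus \Phi H^2(\D^2)$, which is doubly commuting only when $\Phi$ depends on a single variable (this is where the hypothesis that \emph{both} $\q_\varphi$ and $\q_\psi$ are non-trivial bites); and a one-variable inner $\Phi$ gives $\cls^\perp = \q_\Phi \otimes H^2(\D)$ or $H^2(\D) \otimes \q_\Phi$, contradicting $\cls^\perp = \q_\varphi \otimes \q_\psi$ with both factors non-trivial. The delicate point is justifying that a rank-one submodule of $H^2(\D^2)$ is necessarily of the form $\Phi H^2(\D^2)$ with $\Phi$ inner — this is essentially the statement that a singly generated submodule is Beurling-type — and I would expect the cleanest route is to restrict to slices $\{z\} \times \D$ and $\D \times \{w\}$, use one-variable Beurling on each, and patch.

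An alternative, more hands-on route that avoids citing heavy machinery: compute the defect/wandering data explicitly. Since $\cls^\perp = \q_\varphi \otimes \q_\psi$ is finite-dimensional in each variable direction when $\varphi,\psi$ are finite Blaschke products, one can first prove the theorem in that case by an explicit dimension count of $\cls \ominus (M_z \cls + M_w \cls)$ — the two-variable analogue of the wandering subspace — showing this space is exactly $2$-dimensional (spanned by suitable multiples of $\varphi$ and $\psi$), and then pass to general inner $\varphi,\psi$ by an approximation or direct-limit argument, or by noting the rank is lower semicontinuous and bounded below by a slice computation. I anticipate the main obstacle is the lower bound $\mbox{rank~}\cls \ge 2$ in full generality: ruling out a single cyclic vector requires genuinely two-variable input, since naively each coordinate shift restricted to $\cls$ ``looks like'' it could be cyclic. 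The resolution will hinge on the rigidity of doubly commuting quotient modules — precisely the tensor decomposition $\clq = \clq_1 \otimes \clq_2$ recalled in the introduction — together with the observation that $\Phi H^2(\D^2)$ is co-doubly commuting only for one-variable $\Phi$.
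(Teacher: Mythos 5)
Your upper bound is fine: $\cls$ is the (closed) sum $\varphi H^2(\D^2)+\psi H^2(\D^2)$, so $\{\varphi,\psi\}$ is a generating set and $\mbox{rank~}\cls\le 2$; this is a slightly more explicit version of what the paper gets by citing Theorem 6.2 of \cite{CDS}. The genuine gap is in the lower bound, and it is not a technical one: your main route assumes that a singly generated (rank one) submodule of $H^2(\D^2)$ must be of the form $\Phi H^2(\D^2)$ for some inner $\Phi$. That is false --- Beurling's theorem fails on the bidisc. The submodule generated by $z-w$ is singly generated but is not $\Phi H^2(\D^2)$ for any inner $\Phi$, and Rudin \cite{R} gives many further examples; ``one-variable Beurling on each slice and patch'' does not work, precisely because slice data does not assemble into a global inner divisor. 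Worse, the special case you would actually need --- that a rank one \emph{co-doubly commuting} submodule is of Beurling type --- is exactly the Douglas--Yang question \cite{DY} that this theorem is designed to settle; it is the Corollary deduced \emph{from} the theorem, so invoking it here is circular. Your fallback via the wandering-type space $\cls\ominus\overline{M_z\cls+M_w\cls}$ is a legitimate idea in principle (its dimension does bound the rank from below), but you do not compute it for general inner $\varphi,\psi$, and the proposed passage from finite Blaschke products to arbitrary inner functions by ``lower semicontinuity of rank'' is unsupported; rank has no such continuity in this setting.

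For contrast, the paper's lower bound avoids any Beurling-type rigidity. It compresses twice, via Lemma \ref{Lemma}, down to the semi-invariant subspace $\tilde{\cle}=(\varphi\q_{\varphi}\otimes\q_{\psi})\oplus(\q_{\varphi}\otimes\psi\q_{\psi})$, and then shows directly that no single $\xi\in\tilde{\cle}$ is cyclic there: using the conjugation ($C$-symmetry) $f\mapsto M_z^*(\theta\bar f)$ on model spaces, it manufactures for each $\xi$ a nonzero $\eta_\xi\in\tilde{\cle}$ with $\langle (z^p\otimes w^q)\xi,\eta_\xi\rangle=0$ for all $p,q$. To repair your argument you would need either such an explicit orthogonality construction or an actual proof that $\dim\bigl(\cls\ominus\overline{M_z\cls+M_w\cls}\bigr)\ge 2$ for arbitrary inner $\varphi,\psi$.
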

\begin{proof}
Let $X = I_{H^2(\D^2)} - (I_{H^2(\D^2)} - M_{\varphi} M_{\varphi}^*
\otimes I_{H^2(\D)})(I_{H^2(\D^2)} - I_{H^2(\D)} \otimes M_{\psi}
M_{\psi}^*)$. Since
\[
\cls = \mbox{ran} X,
\]
and
\[
X = ((M_{\varphi} M_{\varphi}^*) \otimes (I_{H^2(\D)} - M_{\psi} M_{\psi}^*))
\oplus (I_{H^2(\D)} \otimes M_{\psi} M_{\psi}^*),
\]
it follows that
\[
\cls = \left( \s_{\varphi} \otimes \q_{{\psi}}\right) \oplus
\left(H^2(\D) \otimes \s_{{\psi}} \right).
\]
Since by Theorem 6.2 of \cite{CDS}, $\mbox{rank~} \cls \leq 2$, we
only need to show that $\mbox{rank~} \cls \geq 2$. Set
\[
\mathcal{E} = \cls \ominus \left( \s_{\varphi} \otimes
\s_{{\psi}}\right).
\]
It follows that
\[
\mathcal{E} = \left( \s_{\varphi} \otimes \q_{{\psi}}\right) \oplus
\left( \q_{\varphi} \otimes \s_{{\psi}}\right).
\]
Since $\s_{\varphi} \otimes \s_{{\psi}} \subseteq \clm$ is a
submodule of $H^2(\D^2)$, by Lemma \ref{Lemma}, it follows that
\begin{equation}\label{eq6}
\text{rank} (P_{\cle} M_z|_{\cle}, P_{\cle} M_w|_{\cle}) \leq
\text{rank} (M_z|_{\cls}, M_w|_{\cls}) = \text{rank} (\cls).
\end{equation}
Note that
\[
P_{\mathcal{E}} = (P_{\s_{\varphi}} \otimes P_{\q_{{\psi}}}) \oplus
(P_{\q_{\varphi}}\otimes P_{\s_{{\psi}}}).
\]
and hence, an easy calculation yields
\[
P_{\mathcal{E}}M_{z}|_{\mathcal{E}} =
(M_{z}|_{\s_{\varphi}} \otimes P_{\q_{{\psi}}})
\oplus (P_{\q_{\varphi}}M_{z}|_{\q_{\varphi}}\otimes
P_{\s_{{\psi}}}),
\]
and
\[
P_{\mathcal{E}}M_{w}|_{\mathcal{E}} = (P_{\s_{\varphi}} \otimes
P_{\q_{{\psi}}}M_{w}|_{\q_{{\psi}}}) \oplus (P_{\q_{\varphi}}\otimes
M_{w}|_{\s_{{\psi}}}).
\]
Therefore it follows from the above equalities that
$(\s_{\varphi^2}\otimes \q_{\psi}) \oplus ( \q_{\varphi}\otimes
\s_{{\psi}^2})$ is a joint $(P_{\mathcal{E}}M_{z}|_{\mathcal{E}},
P_{\mathcal{E}}M_{w}|_{\mathcal{E}})$ invariant subspace of $\cle$.
Set
\[
\tilde{\mathcal{E}} = \mathcal{E} \ominus ((\s_{\varphi^2}\otimes
\q_{\psi}) \oplus ( \q_{\varphi}\otimes \s_{{\psi}^2})).
\]
Notice that for any inner function $\theta \in H^\infty(\D)$, we
have
\[
\cls_\theta \ominus \cls_{\theta^2} = \theta \clq_{\theta}.
\]
From this and the representation of $\cle = ( \s_{\varphi} \otimes
\q_{{\psi}}) \oplus (\q_{\varphi} \otimes \s_{{\psi}})$ it follows
that
\[
\begin{split}
\tilde{\mathcal{E}} & = (( \s_{\varphi} \otimes \q_{{\psi}}) \oplus
(\q_{\varphi} \otimes \s_{{\psi}}))
\ominus ((\s_{\varphi^2}\otimes \q_{\psi}) \oplus (\q_{\varphi}\otimes \s_{{\psi}^2}))\\
& \hspace{0.1in} = (\varphi\q_{\varphi} \otimes \q_{{\psi}}) \oplus
(\q_{\varphi}\otimes {\psi}\q_{{\psi}}).
\end{split}
\]
Then Lemma \ref{Lemma} and (\ref{eq6}) implies that
\[
\text{rank} (P_{\tilde{\cle}} M_z|_{\tilde{\cle}}, P_{\tilde{\cle}}
M_w|_{\tilde{\cle}}) \leq \text{rank} (P_{\cle} M_z|_{\cle},
P_{\cle} M_w|_{\cle}) \leq \text{rank} (\cls) \leq 2.
\]
To finish the proof of the theorem it is now enough to prove the
following:
\[
\text{rank} (P_{\tilde{\cle}} M_z|_{\tilde{\cle}}, P_{\tilde{\cle}}
M_w|_{\tilde{\cle}})>1.
\]
Equivalently, it is enough to prove that the set $\{\xi\}$, for any
$\xi \in \tilde{\cle}$, is not a generating set corresponding to
$(P_{\tilde{\mathcal{E}}}M_{z}|_{\tilde{\mathcal{E}}},
P_{\tilde{\mathcal{E}}}M_{w}|_{\tilde{\mathcal{E}}})$. Equivalently,
given $\xi \in \tilde{\cle}$, we show that there exists $\eta_{\xi} (\neq 0) \in
\tilde{\cle}$ such that
\[
\langle (z^p \otimes w^q) \xi, \eta_{\xi}\rangle = 0 \quad \quad
(p,q\in \mathbb{N}).
\]
To this end, let $\{f_i\}$ and $\{g_j\}$ be orthonormal bases of
$\q_{\varphi}$ and $\q_{\psi}$, respectively, and let $\xi \in
\tilde{\cle}$ where
\[
\xi = (\sum_{k, l} a_{kl} {\varphi} f_k\otimes g_l) \oplus
(\sum_{k,l} b_{kl} f_k\otimes \psi g_l),
\]
$\{a_{kl}\}, \{b_{kl}\} \subseteq \mathbb{C}$, and
\[
\sum_{k, l} |a_{kl}|^2, \sum_{k,l} |b_{kl}|^2 <\infty.
\]
Again we observe that for any inner function $\theta \in
H^\infty(\D)$ and $f = \sum_{m \geq 0} c_m z^m \in \clq_{\theta}$ we
have
\[
M_z^* ( \theta \bar{f}) \in \clq_{\theta},
\]
where $\bar{f} = \sum_{m \geq 0} \bar{c}_m e^{- i m t} \in
L^2(\mathbb{T})$. This follows from the fact that $\theta$ is a
bounded holomorphic function on $\D$ and $M_z^* (\theta \bar{f}) \perp
z^m$ for all $m < 0$ (which gives that $M_z^* (\theta \bar{f}) \in
H^2(\D)$), and then $M_z^* (\theta \bar{f}) \perp \theta z^m$ in
$L^2(\mathbb{T})$ for all $m \geq 0$ (which gives that $M_z^* (\theta
\bar{f}) \in \clq_{\theta}$). It should be noted that
$M_z^* (\theta \bar{f}) =\theta \overline{zf} = C_{\theta}(f)$, where the conjugation map
$C_{\theta}:\q_{\theta}\to \q_{\theta}$, $f\mapsto M_z^*(\theta \bar{f})$,
is called a $C$-symmetry and it is used extensively in the
study of Toeplitz operators on model spaces (for more details see \cite{Gar}).

\noindent Coming back to our context, this immediately yields that
\[
M_z^* ({\varphi} \overline{f_k}) \otimes M_w^* ({\psi}\overline {g_l})
\in \q_{\varphi} \otimes \q_{\psi} \quad \quad (k, l\geq 0),
\]
and hence $s_0\otimes s_1, t_0\otimes t_1 \in \q_{\varphi} \otimes
\q_{\psi}$, where
\[
s_0\otimes s_1 := - \sum_{k,l} \overline{a}_{kl} M_z^*({\varphi}
\overline{f}_k) \otimes M_w^* ({\psi}\overline{g}_l) = - (M_z^*
\otimes M_w^*)({\varphi} \otimes{\psi}) (\sum_{k, l} \bar{a}_{kl}
\bar{f}_k \otimes \bar{g}_l)
\]
and
\[
t_0\otimes t_1 := \sum_{k,l} \overline{b}_{kl} M_z^*({\varphi}
\overline{f}_k) \otimes M_w^* ({\psi}\overline{g}_l) = (M_z^*
\otimes M_w^*)({\varphi} \otimes{\psi}) (\sum_{k, l} \bar{b}_{kl}
\bar{f}_k \otimes \bar{g}_l).
\]
Set
\[
\eta_{\xi} = ({\varphi} t_0\otimes t_1) \oplus (s_0 \otimes {\psi}
s_1) \in \tilde{\cle}.
\]
Then $\eta_{\xi}\neq 0$ and for every $p, q\in \mathbb{N}$ we have
\[
\begin{split}
\langle (z^p\otimes {w}^q) \xi, \eta_{\xi}\rangle & = \langle
(z^p\otimes w^q) ((\sum_{k, l} a_{k l} {\varphi} f_k\otimes g_l)
\oplus (\sum_{k,l} b_{kl} f_k\otimes \psi g_l)), (\varphi t_0\otimes
t_1) \oplus (s_0 \otimes \psi s_1)\rangle
\\
& = \langle (z^p\otimes w^q) (\sum_{k, l} a_{k l} \varphi f_k\otimes
g_l), \varphi t_0\otimes t_1 \rangle
\\
&\hspace{1in} + \langle (z^p\otimes w^q) (\sum_{k,l} b_{kl} f_k
\otimes \psi g_l), s_0 \otimes \psi s_1\rangle
\\
& = \langle (z^p\otimes w^q) (\sum_{k, l} a_{k l} f_k \otimes g_l),
t_0\otimes t_1 \rangle + \langle (z^p\otimes w^q)(\sum_{k,l} b_{kl}
f_k\otimes g_l), s_0 \otimes s_1 \rangle
\\
& = \langle (z^{p+1}\otimes w^{q+1}) (\sum_{k, l} a_{k l}~
f_k\otimes g_l),
({\varphi}\otimes{\psi})(\sum_{k,l=1}^{\infty} \bar{b}_{kl} \bar{f}_k\otimes \bar{g}_l)\rangle
\\
&\hspace{1in} - \langle (z^{p+1}\otimes w^{q+1})(\sum_{k,l} b_{kl}
f_k\otimes g_l), ({\varphi} \otimes{\psi}) (\sum_{k, l} \bar{a}_{kl}
\bar{f}_k \otimes \bar{g}_l)\rangle
\\
&=0 .
\end{split}
\]
We have thus shown that $\{\xi\}$ is not a minimal generating
subset of $\tilde{\cle}$ with respect to
$(P_{\tilde{\mathcal{E}}}M_{z}|_{\tilde{\mathcal{E}}},
P_{\tilde{\mathcal{E}}}M_{w}|_{\tilde{\mathcal{E}}})$ as desired.
\end{proof}

As a consequence of the above theorem we have the following
corollary which provides an affirmative answer of the question
raised by Douglas and Yang \cite{DY}.

\begin{Corollary}
Let $\cls$ be a co-doubly commuting submodule of $H^2(\D^2)$. Then
rank $(\cls)=1$ if and only if $\cls = \Theta H^2(\D^2)$ for some
one variable inner function $\Theta \in H^\infty(\D)$.
\end{Corollary}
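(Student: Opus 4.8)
The plan is to read the corollary off Theorem~\ref{mainthm} together with the tensor-product description of doubly commuting quotient modules recalled in Section~\ref{sec:1}; no new analysis is needed, only an organised case analysis.

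I would first dispatch the easy implication. Assume $\cls = \Theta H^2(\D^2)$ for a one-variable inner function $\Theta$. Since $M_\Theta$ is an isometry on $H^2(\D^2)$ and $\{z^pw^q : p,q\in\Nat\}$ spans a dense subspace of $H^2(\D^2)$, we get $[\{\Theta\}]_{(M_z|_\cls,\,M_w|_\cls)} = M_\Theta H^2(\D^2) = \cls$, so $\{\Theta\}$ is a generating set and hence $\operatorname{rank}\cls \le 1$; as $\Theta \neq 0$ forces $\cls \neq \{0\}$, we conclude $\operatorname{rank}\cls = 1$. (As a consistency check, such an $\cls$ is indeed co-doubly commuting: writing $\Theta$ as $\varphi(z)$ or $\psi(w)$ makes $\cls^\perp$ equal to $\clq_\varphi\otimes H^2(\D)$ or $H^2(\D)\otimes\clq_\psi$, which is doubly commuting.)

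For the converse, suppose $\cls$ is co-doubly commuting with $\operatorname{rank}\cls = 1$. Then $\cls^\perp$ is a doubly commuting quotient module, so $\cls^\perp = \clq_1\otimes\clq_2$ for quotient modules $\clq_1,\clq_2$ of $H^2(\D)$, and by Beurling's theorem each $\clq_j$ is $\{0\}$, $H^2(\D)$, or $\clq_{\theta_j}$ for a non-constant inner function $\theta_j$. I would then split into cases. If both $\clq_1$ and $\clq_2$ are non-trivial, then $\cls = (\clq_{\theta_1}\otimes\clq_{\theta_2})^\perp$ with $\theta_1,\theta_2$ non-constant inner, so Theorem~\ref{mainthm} gives $\operatorname{rank}\cls = 2$, contradicting the hypothesis. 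If $\clq_1=\{0\}$ or $\clq_2=\{0\}$, then $\cls^\perp=\{0\}$, i.e.\ $\cls = H^2(\D^2) = 1\cdot H^2(\D^2)$, which is of the required form. In the only remaining configuration, after swapping the two tensor factors if necessary, $\clq_1 = H^2(\D)$ and $\clq_2 = \clq_\psi$ for some inner $\psi$ — the sub-case $\clq_2 = H^2(\D)$ gives $\cls = \{0\}$ of rank $0$ and is excluded — so that $\cls = (H^2(\D)\otimes\clq_\psi)^\perp = H^2(\D)\otimes\psi H^2(\D) = \Theta H^2(\D^2)$ with $\Theta(z,w) = \psi(w)$, a one-variable inner function. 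This exhausts the cases.

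Since Theorem~\ref{mainthm} carries the whole weight, there is no real obstacle here; the only points needing care are the bookkeeping for the degenerate configurations of $\clq_1\otimes\clq_2$ (a factor equal to $\{0\}$ or to all of $H^2(\D)$) and, depending on one's convention for ``inner function'', the remark that the trivial submodule $H^2(\D^2)$ equals $\Theta H^2(\D^2)$ with $\Theta$ a unimodular constant — or, alternatively, restricting attention to non-trivial submodules, in line with the scope of Theorem~\ref{mainthm}.
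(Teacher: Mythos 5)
Your proposal is correct and follows essentially the same route as the paper: the easy direction is the isometry observation, and the converse invokes the tensor-product structure of $\cls^\perp$ and Theorem~\ref{mainthm} to force one factor to be trivial. The only difference is that you spell out the degenerate configurations ($\clq_j=\{0\}$ or both factors equal to $H^2(\D)$) more explicitly than the paper does, which is a harmless refinement rather than a new idea.
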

\begin{proof}
If $\cls = \Theta H^2(\D^2)$ for some one variable inner function
$\Theta \in H^\infty(\D)$, then $\cls \cong H^2(\D^2)$ and hence
rank $\cls =1$. To prove the the sufficient part let $\cls$ be a
rank one co-doubly commuting submodule of $H^2(\D^2)$. Then there
exist quotient modules $\clq_1$ and $\clq_2$ of $H^2(\D)$ such that
(see \cite{bshift, sarkar})
\[
\cls = (\q_1 \otimes \q_2)^{\perp}.
\]
Since rank $(\cls)=1$, it follows from Theorem \ref{mainthm} that
$\clq_j = H^2(\D)$, for some $j = 1, 2$. This shows that
\[
\cls = \s_{\varphi}\otimes H^2(\D), \quad \mbox{or} \quad \quad \cls
= H^2(\D)\otimes \s_{\psi},
\]
for some inner functions $\varphi, \psi \in H^\infty(\D)$. This
concludes the proof of the corollary.
\end{proof}

There is now the following interesting and natural question: Let $m
\geq 2$ and let $\{\vp_j\}_{j=1}^m \subseteq H^\infty(\D)$ be inner
functions. Is then
\[\mbox{rank~} (\clq_{\vp_1} \otimes \cdots \otimes
\clq_{\vp_m})^\perp = m?
\]
Our present approach does not seem to work for $m> 2$ case.

\vspace{0.1in} \noindent\textbf{Acknowledgement:}
The first named author acknowledge Fulbright-Nehru Postdoctoral Research Fellowship (Award No. 2164/FNPDR/2016)
and University of New Mexico for warm hospitality.
The second author's research work is supported by DST-INSPIRE Faculty
Fellowship No. DST/INSPIRE/04/2015/001094. The research of the third
author is supported in part by NBHM (National Board of Higher
Mathematics, India) Research Grant NBHM/R.P.64/2014.


\begin{thebibliography}{99}

\bibitem{CDS}
A. Chattopadhyay, B.K. Das and J. Sarkar, {\em Tensor product of
quotient Hilbert modules}, J. Math. Anal. Appl. 424 (2015),
727–-747.

\bibitem{CDS2}
A. Chattopadhyay, B.K. Das and J. Sarkar, {\em Star-generating
vectors of Rudin's quotient modules}, J. Funct. Anal. 267 (2014),
4341-–4360.

\bibitem{DP}
R. Douglas and V. Paulsen,  {\em Hilbert Modules over Function
Algebras}, Research Notes in Mathematics Series, 47, Longman,
Harlow, 1989.


\bibitem{DY}
R. Douglas and R. Yang, {\em Operator theory in the Hardy space over the
bidisk (I)}, Integral Equations Operator Theory 38 (2000), no. 2,
207-–221.

\bibitem{Gar}
S. R. Garcia, {\em Conjugation and Clark operators}, Recent advances in operator-related
function theory, Contemp. Math., vol. 393, Amer. Math. Soc., Providence, RI, 2006,
pp. 67–-111.

\bibitem{III} K. J. Izuchi, K. H. Izuchi and Y. Izuchi, {\em Blaschke
products and the rank of backward shift invariant subspaces over the
bidisk}, J. Funct. Anal. 261 (2011), no. 6, 1457–-1468.

\bibitem{I31}
K. J. Izuchi, K. H. Izuchi and Y. Izuchi, {\em Ranks of invariant
subspaces of the Hardy space over the bidisk}, J. Reine Angew. Math.
659 (2011) 101–-139.

\bibitem{I32}
K. J. Izuchi, K. H. Izuchi and Y. Izuchi, {\em Ranks of backward
shift invariant subspaces of the Hardy space over the bidisk}, Math.
Z. 274 (2013), 885–-903.


\bibitem{bshift}
K. Izuchi, T. Nakazi and M. Seto, {\em Backward shift invariant
subspaces in the bidisc II}, J. Oper. Theory 51 (2004), 361–-376.


\bibitem{R}
W. Rudin, {\em Function Theory in Polydiscs}, Benjamin, New York
1969.

\bibitem{sarkar}
J. Sarkar, {\em Jordan blocks of $H^2(\mathbb{D}^n)$}, J. Operator
theory 72 (2014), 371–-385. .


\bibitem{Se}
M. Seto, {\em Infinite sequences of inner functions and submodules
in $H^2(\mathbb{D}^2)$}, J. Operator Theory 61 (2009), no. 1,
75–-86.

\bibitem{SY}
M. Seto and R. Yang, {\em Inner sequence based invariant subspaces
in $H^2(\mathbb{D}^2)$}, Proc. Amer. Math. Soc. 135 (2007), no. 8,
2519–-2526.


\bibitem{Y}
R. Yang, {\em Operator theory in the Hardy space over the bidisk.
III}, J. Funct. Anal. 186 (2001), 521–-545.


\end{thebibliography}
\end{document}